\newcommand{\D}{\text{D}}
\newtheorem{theorem}{Theorem}[section]
\newtheorem{prop}[theorem]{Proposition}
\theoremstyle{definition}
\newtheorem{definition}[theorem]{Definition}
\theoremstyle{remark}
\newtheorem{remark}[theorem]{Remark}
\numberwithin{equation}{section}
\DeclareMathOperator{\id}{id}
\newcommand{\rmd}{\mathrm{d}}
\begin{document}

\title{Noise-induced instabilities in a stochastic Brusselator}
\author{Maximilian Engel}
\address{Freie Universit\"at Berlin}
\email{g.olicon.mendez@fu-berlin.de}

\author{Guillermo Olic\'on-M\'endez}
\address{Freie Universit\"at Berlin}
\email{maximilian.engel@fu-berlin.de}

\keywords{Finite-time Lyapunov exponents, stochastic oscillator, Brusselator}

\subjclass[2020]{34C26, 34E13, 34E15, 37G10, 37N25}

\date{}

\begin{abstract}
We consider a stochastic version of the so-called Brusselator -  a mathematical model for a two-dimensional chemical reaction network - in which one of its parameters is assumed to vary randomly. 
It has been suggested via numerical explorations that the system exhibits noise-induced synchronization when time goes to infinity. Complementing this perspective, in this work we explore some of its finite-time features from a random dynamical systems perspective. In particular, we focus on the deviations that orbits of neighboring initial conditions exhibit under the influence of the same noise realization. For this, we explore its local instabilities via \textit{finite-time Lyapunov exponents}.
Furthermore, we present the stochastic Brusselator as a fast-slow system in the case that one of the parameters is much larger than the other one. In this framework, an apparent mechanism for generating the stochastic instabilities is revealed, being associated to the transition between the slow and fast regimes.
\end{abstract}

\maketitle

\section{Introduction}
\label{SEC:Intro}
In the past few decades there has been an increased interest in the effects of noisy perturbations on deterministic dynamical systems and their bifurcations, i.e.~critical parameter-dependent changes of qualitative behaviour in the system. In many cases, such a change of behaviour mirroring the deterministic bifurcation can be observed in the density of the respective stationary distribution (if it exists). The change of its shape, typically observed in the local maxima of such density functions, has been called a \textit{P-bifurcation}, standing for \textit{phenomenological bifurcation} \cite{arnold1998}. 

Another direction has been addressing the qualitative change in the structure of the objects which remain invariant under the random dynamics. In this context, a seminal insight was obtained at the hand of the pitchfork bifurcation with additive noise \cite{CrauelFlandoli98}, stating that for any such perturbation of the pitchfork normal form the system admits a unique random equilibrium independently from the parameter value,
whereas the deterministic system exhibits a bifurcation from one to two locally attracting equilibria. Even in the bistable scenario, when the system is perturbed with additive noise the unique random equilibrium is globally attracting, from which it follows that any two trajectories of the system approach each other as time grows. This phenomenon is commonly known as \emph{noise-induced synchronization}.
However, it has been argued that the random bifurcation persists in a more subtle manner, being detected by changes of finite-time stability and the dichotomy spectrum \cite{Callawayetal}. This point of view, measuring a random bifurcation in particular by means of \emph{finite-time Lyapunov exponents} (FTLEs), has been extended to Hopf bifurcations in normal form \cite{Doanetal} and the infinite-dimensional situation recently \cite{BlumenthalEngelNeamtu}. This perspective puts emphasis on the transient dynamics rather than on the asymptotic behaviour of the orbits, which becomes of particular relevance in the applied sciences.

Despite the apparent asymptotic stabilizing effect that noisy perturbations often have on deterministic systems, there has been recent progress on showing \textit{noise-induced chaos} for particular classes of stochastically perturbed oscillators \cite{EngelLambRasmussen1}, implying a stochastic bifurcation from synchronization to chaotic attractors. 
Building up on numerical investigations  \cite{LinYoung2008} and \emph{Furstenberg-Khasminskii} formulas for Lyapunov exponents \cite{ImkellerLederer1999, ImkellerLederer2001}, 
it was shown lately \cite{ChemnitzEngel2022} that a positive Lyapunov exponent can be obtained for appropriate parameter combinations in a model of Hopf bifurcations with shear terms and additive noise.

In this work we turn our attention to a stochastic version of the Brusselator. This system is an example of a chemical reaction network whose reaction rate equation exhibits a Hopf bifurcation, i.e.~the emergence of sustained oscillations after passing a certain critical parameter value. The model has proven to be dynamically very rich. For instance, when considering the space-time dynamics of the Brusselator as a reaction-diffusion equation the system exhibits successive bifurcations which induce \textit{dissipative structures} \cite{NicolisPrigogine}. Furthermore, under the influence of a periodic forcing, it may exhibit chaotic behaviour \cite{Young13}. Here, we explore a complementary route by assuming that one of the system's parameters is perturbed by Brownian noise, yielding a stochastic differential equation (SDE) as considered in \cite{ArnoldBrus}. 

We employ the perspective of \emph{random dynamical systems} (RDS) in order to understand the trajectory-wise behaviour of the solutions of this SDE on the state space $ X = \mathbb R^2$. 
Taking the time set $\mathbb{R}$, the RDS $(\theta,\varphi)$ consists of the noise model $(\Omega,\mathcal{B},\mathbb{P},\theta)$, where $(\Omega, \mathcal{B})$ is the canonical path space with Wiener measure $\mathbb{P}$, and a $\mathbb{P}$-invariant dynamical system $\theta=(\theta_t)_{t\in\mathbb{R}}$, and the family of maps
$\varphi^t_{\omega}:X\rightarrow X$ which determines the state of the system at time $t\geq 0$ for a fixed noise realization $\omega\in\Omega$.
The RDS approach allows to compare two trajectories with the same noise realizations $\omega \in \Omega$. In other words, given that the map $\varphi^t_{\omega}$ determines the evolution of the system, we want to compare $\varphi^t_\omega(x_0,y_0)$ and $\varphi^t_\omega(x_1,y_1)$ for distinct elements $(x_0,y_0)$ and $(x_1,y_1)$ of the state space $X$. Specifically, the RDS approach analyses whether these two trajectories synchronize or diverge asymptotically, determining the form of a \emph{random attractor} \cite{CrauelKloeden15}. We refer the reader to \cite{arnold1998} for a comprehensive exposition of RDS theory.

While it has been suggested numerically in \cite{ArnoldBrus} that the Brusselator with parametric noise exhibits noise-induced synchronization as $t\rightarrow\infty$, we adopt a different point of view by quantifying finite-time instabilities via the FTLEs. We observe numerically that the Hopf bifurcation of the stochastic Brusselator persists in a similar manner as in the cases previously mentioned -- the system exhibits instabilities in finite-time windows.

We perform our numerical explorations in two complementary directions: we calculate the FTLE for a fixed noise realization $\omega\in\Omega$, first by fixing a time length $T>0$ and varying the intial condition, and secondly by fixing an initial condition $(x,y)\in\mathbb{R}^2_+$ and seeing the evolution of the FTLE through time windows of increasing length. In the former scenario, it is shown that the state space has regions of instability (i.e. where the FTLE is positive) which are not only localized around the deterministic unstable equilibrium. In the latter, we show that the FTLEs may attain positive values for arbitrarily large time windows whenever one of the parameters is large enough. It is precisely this last setting where the Brusselator can be formulated as a \textit{slow-fast system}. We sketch the different time scales involved in the deterministic setting, and how they extend to the stochastic picture.

\subsection*{Structure of the paper}
The rest of the paper is structured as follows. In Section~\ref{SEC:brusselatorIntro} we introduce the notion of a \textit{chemical reaction network}, and its different modeling approaches, depending on the volume scaling under consideration. In particular, we present the Brusselator as a chemical reaction network, and the model that we analyse throughout the rest of this paper. In Section~\ref{SEC:FTLE} we introduce the notion of finite-time Lyapunov exponents, as an indicator of instabilities in bounded time windows. In Section~\ref{SEC:slow-fas}, we perform a change of coordinates which turns the stochastic Brusselator into a slow-fast system in standard form. We further sketch a possible mechanism which generates the finite-time instabilities observed via the finite-time Lyapunov exponents. Last, in Section~\ref{SEC:conclusion} we provide some concluding remarks with an outlook for future work.

\section{The Brusselator: a chemical oscillator}
\label{SEC:brusselatorIntro}

Consider the general model of $L$ chemical species $S_1, \dots, S_L$ 
such that the state of the whole reaction system at time $t \geq0$ is given by $Z(t) = (Z_1, \dots, Z_L(t)) \in \mathbb{N}_0^L$ with $Z_l(t)$ 
denoting the number of molecules of type $S_l$ at time $t$. Reactions $R_k$, changing the number of molecules of a subset of species, occur at rates $\alpha_k(x)$ which are defined as propensity funtions involving reaction rate constants $\gamma_k$.
Assuming that these functions depend only on the current value of $Z(t)$, we obtain the \emph{time-change representation of the Markov jump process}
\begin{equation} \label{eq:tcr}
Z(t) = Z(0) + \sum_{k=1}^K \mathcal P_k \left( \int_0^t \alpha_k (Z(s)) \rmd s \right) \nu_k,
\end{equation}
where $\nu_k$ is the state change vector of reaction $R_k$ and $\mathcal P_k$, $k=1, \dots,K$, denote independent Poisson processes with unit rate.
For high accumulations of reaction events, the Poisson variables can be approximated by normal random variables, yielding the \emph{chemical Langevin equation} in It\^o form
\begin{equation} \label{eq:CLE}
\rmd Z(t) = \sum_k \alpha_k (Z(t)) \nu_k \rmd t + \frac{1}{\sqrt{V}}\sum_k \sqrt{\alpha_k (Z(t))} \nu_k \rmd W_k(t),
\end{equation}
where each $W_k(t)$ denotes a one-dimensional Wiener process and the state space for solutions of equation~\eqref{eq:CLE} is now extended to $\mathbb{R}^L$.
In the thermodynamic limit where volume and volume-dependent molecule numbers scale in a fixed proportion approaching infinity, we may average over the noise terms to obtain the purely deterministic version of equation~\eqref{eq:CLE}, i.e.~the ODE
\begin{equation} \label{eq:RRE}
\rmd Z(t) = \sum_k \alpha_k (Z(t))\cdot  \nu_k \rmd t,
\end{equation}
also called \emph{reaction rate equation}. For chemical reaction networks with different volume-dependent scalings, the three description levels~\eqref{eq:tcr}, \eqref{eq:CLE} and~\eqref{eq:RRE} can also be combined into hybrid models \cite{winkelmann2017hybrid}. 
For an introductory exposition of chemical reaction networks from these different perspectives see \cite{winkelmann2020}.

In this work we focus on the chemical reaction network
\begin{equation}\label{eq:BRUSSsystem}
\begin{array}{lrcl}
R_1: &	A & \rightarrow& X, \\
R_2: &	B+X & \rightarrow& Y+D,\\
R_3: &	2X+Y & \rightarrow& 3X,\\
R_4: &	X & \rightarrow& E.
\end{array}
\end{equation}
We refer to (\ref{eq:BRUSSsystem}) as the \textit{Brusselator reaction network}. As classically considered, we suppose that $A$ and $B$ satisfy the \textit{infinite pool assumption}, for which an inexhaustible supply of both reactants is injected into the system so that their number of molecules (or their concentration when properly rescaled with the volume of the system) are assumed to be constant parameters. Under this assumption, system~\eqref{eq:BRUSSsystem} becomes an \textit{open system}. It is relevant to note here that both $E$ and $D$ are products of the system which do not interact back with the rest of the reactants. For this reason, we focus only on the variables $X$ and $Y$ in the rest of the paper.

The Brusselator was originally proposed in \cite{Prigogine68} as an example of a chemical system involving only two species which under suitable conditions is driven out of the thermodynamic equilibrium. The exploration of such type of reaction networks becomes relevant since many biochemical systems depend essentially on two main species. For such systems which involve only uni- and bimolecular reactions, it is known that if the rate functions $\alpha_k$ obey the \textit{law of mass action} the dynamics given by their reaction rate equations (cf. \eqref{eq:RRE}) does not admit attracting limit cycles, see \cite[Chapter 7]{NicolisPrigogine}. The intermediate reaction channel $R_3$ thus provides a simple trimolecular step for which interesting dynamics may emerge. Notice, however, that the reaction $R_3$ is physically unrealistic, making the Brusselator only a prototype of chemical instabilities.

\subsection{The reaction jump process}
Following \cite{winkelmann2020}, for each of the reaction channels $R_k$ ($k=1,2,3,4$) we define the vectors $\vec{v}_k$ as
\[
\begin{array}{cc}
	\begin{array}{rcl}
        \nu_1&=&(1,0,0,0),\\
	\nu_2&=&(-1,1,1,0),
        \end{array} 
        &
	\begin{array}{rcl}
        \nu_3&=&(1,-1,0,0),\\
	\nu_4&=&(-1,0,0,1).\\
        \end{array}
\end{array}
\]
According to the law of mass action, the corresponding reaction rates $\alpha_k$ for each reaction channel $R_k$ are given as \cite{Dey11}
\[
\begin{array}{cc}
    \begin{array}{rcl}
	\alpha_1(Z) &=& \gamma_1 A\\
	\alpha_2(Z) &=& \gamma_2V^{-1}B\cdot X
    \end{array} &
    \begin{array}{rcl}
	\alpha_3(Z) &=& \gamma_3V^{-2}\cdot X(X-1)Y \\
	\alpha_4(Z) &=& \gamma_4 \cdot X,\\
    \end{array}
\end{array}	
\]
where $Z=(X,Y,D,E)$, and, abusing the notation, the variables $Z_i(t)$ denote the number of molecules of species $Z_i\in\lbrace X,Y,D,E \rbrace$ at time $t$. The constants $\gamma_i$ for $i=1,2,3,4$ are positive real numbers, $A,B\in\mathbb{N}$, and $V>0$ denotes the volume of the system. The evolution of the system is thus given by the Markov jump process \eqref{eq:tcr}. 

The Brusselator reaction network, modeled as a Markov jump process, has been numerically studied in \cite{gillespie1977exact}, whose oscillatory behaviour was compared with the \textit{Lotka-Volterra system}. The numerical scheme employed is called \textit{the direct method}, and splits the jump process into two stochastic processes $(Z_n), (T_n)$, where $T_n$ is the time at which a reaction $R_k$ occurs and $Z_n=Z(T_n)$ is the state of the system which remains constant until another reaction occurs. Both quantities evolve according to a random difference equation, and thus define an RDS, see \cite{EngelOlicon22} for general results in this direction.

\subsection{Reaction rate equations}
By considering the thermodynamic limit when $V\rightarrow\infty$, the reaction rate equation \eqref{eq:RRE} for the Brusselator reaction network reads as
\begin{equation}
\label{eq:BrusDet}
\begin{array}{rcl}
    \dot{X}&=&a-(1+b)X+X^2Y, \\
    \dot{Y}&=&bX-X^2Y.
\end{array}
\end{equation}
System \eqref{eq:BrusDet} has been thoroughly studied \cite{NicolisPrigogine}, exhibiting the following key properties: it has a unique equilibrium point $(a,b/a)$ which is stable whenever $b<b_{crit}:=1+a^2$ and unstable when $b>b_{crit}$. An attracting limit cycle exists precisely in the latter case due to a \textit{Hopf bifurcation}, see Figure~\ref{fig:detBrusselator}.

\begin{figure}[ht]
     \centering

\begin{subfigure}[b]{0.48\textwidth}
        \centering
      \begin{overpic}[width=\textwidth]{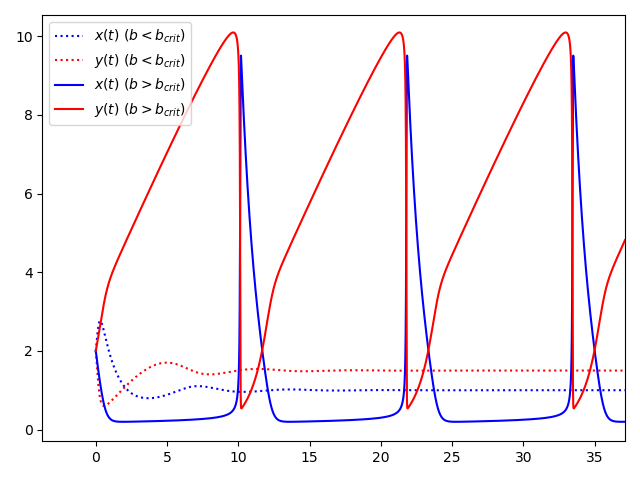}
        \put(52,-3){\scriptsize $t$}
        \end{overpic}
        \caption{}
    \end{subfigure}
     \hfill
     \begin{subfigure}[b]{0.47\textwidth}
         \centering
          \begin{overpic}[width=\textwidth]{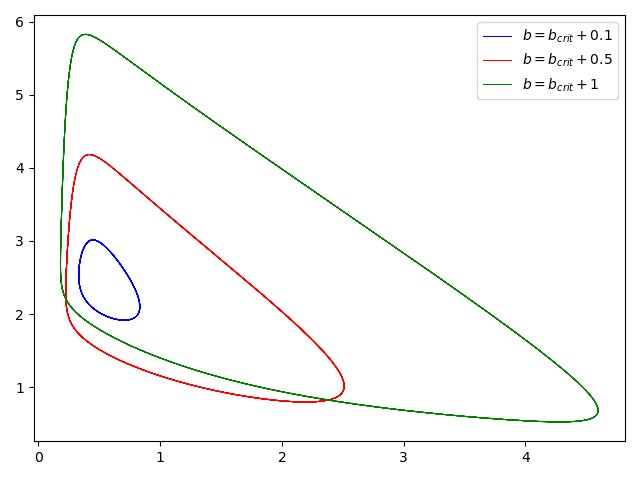}
        \put(-2,40){\scriptsize $y$ }
        \put(50,-2){\scriptsize $x$}
        \end{overpic}
        \caption{}
    \end{subfigure}
     \hfill
       \caption[Solutions of the Brusselator]{In (A), the time series of two different orbits of the system for two different parameter values respectively. Oscillatory behaviour arises when $b>b_{crit}$, and exhibits a clear time scale separation when $b\gg b_{crit}$. In (B), different profiles of the attracting limit cycle are depicted depending on the value of $b$. The limit cycles grow as $b$ becomes larger.}
        \label{fig:detBrusselator}
\end{figure}

\subsection{The chemical Langevin equation}
Consider now the rescaled variable $(X,Y)\mapsto \frac{1}{V}(X,Y)$. For large values of $V$ the Markov jump process can be approximated by the chemical Langevin equation \eqref{eq:CLE}, which for the Brusselator reads as the It\^o SDE
\begin{equation}\label{eq:BRUSSLangevin}
	\begin{array}{lll}
		dX_t&=&\left(\tilde{\gamma}_1\tilde{A}-\tilde{\gamma}_2\tilde{B}X_t+\tilde{\gamma}_3X_t(X_t-\frac{1}{V})Y_t-\tilde{\gamma}_4X_t\right)dt +\sqrt{\frac{\tilde{\gamma}_1\tilde{A}}{V}}d\tilde{W}_t^1\\
		& & -\sqrt{\frac{\tilde{\gamma}_2\tilde{B}}{V}X_t}d\tilde{W}_t^2+\sqrt{\frac{\tilde{\gamma}_3}{V}X_t(X_t-\frac{1}{V})Y_t}d\tilde{W}_t^3-\sqrt{\frac{\tilde{\gamma}_4}{V} X_t}d\tilde{W}_t^4,\\
  \\
		dY_t &=& \left(\tilde{\gamma}_2\tilde{B}X_t-\tilde{\gamma}_3X_t(X_t-\frac{1}{V})Y_t\right)dt +\sqrt{\frac{\tilde{\gamma}_2\tilde{B}}{V}X_t}d\tilde{W}_t^2\\
		 & &-\sqrt{\frac{\tilde{\gamma}_3}{V}X_t(X_t-\frac{1}{V})Y_t}d\tilde{W}_t^3,
	\end{array}
\end{equation}
where the rescaled variables $\tilde{A}=\frac{1}{V}A$ and $\tilde{B}=\frac{1}{V}B$ are assumed to be constant parameters independent of the volume $V$, while for $i=1,2,3,4$ the quantities $\tilde{\gamma}_i$ are positive constants, and $\tilde{W}_t^i$ are mutually independent one-dimensional Wiener processes.

While solutions of the Markov jump process and the reaction rate equations are typically well defined for all $t\geq 0$, this is very often not the case for the chemical Langevin equation.  In particular, solutions of \eqref{eq:BRUSSLangevin} may not exist for all $t>0$, since the noisy perturbations push the trajectories through the boundary of the domain $\mathbb R_+^2$ with positive probability. This is clearly an inconsistency with the chemical interpretation of $X$ and $Y$ as concentrations.
There have been attempts to resolve this issue. 
For example, in \cite{wilkieWong08} a modified Langevin equation has been proposed mainly for uni- and bimolecular reactions which respect the positivity of solutions.
However, note that the Brusselator involves a trimolecular reaction, namely $R_3$. As a different strategy, a method with reflecting boundary conditions has been considered in \cite{LeiteWilliams2019}. This last proposal does not come from first principles, and some interpretation of the original problem may be lost.

\subsection{Brusselator with parametric noise}
In the rest of this work, we focus on a stochastic version of equation~\eqref{eq:BrusDet}, where the parameter $b$ is replaced by a perturbation with Brownian noise of Stratonovich type $b+\sigma \circ dW_t$. More precisely, in this work we study the system \cite{ArnoldBrus}
\begin{equation}
    \label{eq:ArnoldBruselator}
        \begin{array}{rcl}
            \rmd x_t &=& \left(a-(1+b)x_t+x_t^2y_t\right)\rmd t
            -\sigma x_t\circ \rmd W_t,\\
            \rmd y_t &=& \left(bx_t-x_t^2y_t\right)\rmd t
            +\sigma x_t\circ \rmd W_t.\\
            
        \end{array}
\end{equation}
We consider the stochastic Brusselator \eqref{eq:ArnoldBruselator} to be an intermediate model between the deterministic system \eqref{eq:BrusDet} and the chemical Langevin equation \eqref{eq:BRUSSLangevin}, guaranteeing the existence of  solutions that remain in the positive quadrant $\mathbb{R}_+^2$ for all positive times. The stochastic perturbations of this form still induce intriguing dynamical features that we will explore in the rest of this work.  

The solutions of \eqref{eq:ArnoldBruselator} induce a RDS as indicated in the Introduction. Formally speaking, we consider the noise space $\Omega=C_0(\mathbb{R},\mathbb{R}^2)$ endowed with is canonical $\sigma$-algebra $\mathcal{B}$, the Wiener measure $\mathbb{P}$, and the shift maps $\theta=(\theta_t)_{t\in\mathbb{R}}$ given by
\[
    \theta_t\omega(\cdot)=\omega(\cdot+t)-\omega(t),
\]
which are $\mathbb{P}$-invariant and ergodic. The state of the system is given by a measurable function $\varphi:\mathbb{R}_+\times\Omega\times \mathbb{R}^2\rightarrow \mathbb{R}^2$, called the \textit{cocycle map over $\theta$}, such that for all $t,s\in\mathbb{R}_+$ and $\omega\in\Omega$,
    \[
        \varphi^0_{\omega}=\id, \qquad \varphi^{t+s}_\omega=\varphi^t_{\theta_s\omega}\circ \varphi^s_{\omega},
    \]
where we adopt the notation $\varphi^t_\omega=\varphi(t,\omega,\cdot)$. The pair $(\theta,\varphi)$ is a \textit{smooth RDS} if the cocycle map $\varphi^t_{\omega}$ is $C^1$ for all $t\in\mathbb{R}_+$ and $\omega\in\Omega$.

We recall the following result.
\begin{theorem}
    \label{THM:RDS_Brusselator}
        The SDE \eqref{eq:ArnoldBruselator} generates a smooth RDS in $\mathbb{R}^2_+$. Moreover, system \eqref{eq:ArnoldBruselator} admits at most one stationary distribution.
\end{theorem}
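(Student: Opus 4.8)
The statement splits into two parts, which I would establish with standard tools from the theory of stochastic flows and from the ergodic theory of hypoelliptic diffusions.

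\emph{Generation of a smooth RDS.} Since the coefficients of \eqref{eq:ArnoldBruselator} are polynomial, hence $C^\infty$, the generation of a smooth cocycle — together with the cocycle identity, joint measurability, and $C^\infty$ dependence on the initial condition — follows by standard theory (see \cite{arnold1998}) once non-explosion inside the state space is known. I would obtain non-explosion by conjugating \eqref{eq:ArnoldBruselator} to a random ODE. Two structural features make this transparent: the noise vector field $V_1 = \sigma x\,(\partial_y - \partial_x)$ is linear and complete, and summing the two equations of \eqref{eq:ArnoldBruselator} cancels the noise, giving $\rmd(x_t+y_t) = (a-x_t)\,\rmd t$. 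Thus, in coordinates $(x,v)=(x,\,x+y)$, the substitution $x_t = e^{-\sigma W_t(\omega)}z_t$ removes the noise entirely and yields, for each fixed $\omega$, the pathwise ODE
\[
\dot z_t = a\,e^{\sigma W_t} - (1+b)z_t + e^{-\sigma W_t}z_t^2 v_t - e^{-2\sigma W_t}z_t^3, \quad \dot v_t = a - e^{-\sigma W_t}z_t,
\]
whose right-hand side is continuous in $t$ and smooth and locally Lipschitz in $(z,v)$.

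Local existence and uniqueness for this ODE are classical, and I would upgrade them to global forward existence: fixing $\omega$ and $[0,T]$ with $\sup_{[0,T]}|W_t|\le M$, the bound $\dot v_t\le a$ gives $v_t\le v_0+aT$ while the solution exists, and then the cubic term $-e^{-2\sigma W_t}z_t^3$ dominates for $z$ large, so $z_t$ remains bounded on $[0,T]$ by a constant depending only on $M,T,z_0,v_0$; this in turn bounds $v_t$ from below, so no blow-up occurs. Furthermore $\dot z_t = a\,e^{\sigma W_t} > 0$ on $\{z=0\}$, so $z_t>0$ (equivalently $x_t>0$) for all $t\ge 0$ once $z_0>0$. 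Undoing the substitution, $\varphi^t_\omega$ is a well-defined smooth cocycle over $\theta$ on $\{x>0\}$, the cocycle identity being inherited from ODE uniqueness together with the additive cocycle property of $t\mapsto W_t$ under $\theta$. The remaining, genuinely delicate point is forward invariance of the quadrant $\mathbb{R}^2_+$ itself — i.e.\ that $y_t$ stays nonnegative — which is where the fine structure of the drift is needed and which is carried out in \cite{ArnoldBrus}.

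\emph{At most one stationary distribution.} Stationary distributions of the white-noise RDS $(\theta,\varphi)$ correspond to invariant probability measures of the associated Markov transition semigroup $(P_t)_{t\ge0}$ (see \cite{arnold1998}), so it suffices to show $(P_t)$ has at most one such measure. In It\^o form the diffusion matrix of \eqref{eq:ArnoldBruselator} equals $\sigma^2 x^2\bigl(\begin{smallmatrix}1&-1\\-1&1\end{smallmatrix}\bigr)$, which has rank one, so the semigroup is not elliptic and I would invoke H\"ormander's bracket condition instead. With $V_0$ the Stratonovich drift and $V_1=\sigma x\,(\partial_y-\partial_x)$, a short computation gives $[V_0,V_1] = -\sigma(a-x^2y+x^3)\,\partial_x + \sigma(a-x-x^2y+x^3)\,\partial_y$, and hence
\[
\det\bigl[\,V_1,\ [V_0,V_1]\,\bigr] = \sigma^2 x^2 \neq 0 \quad\text{on } \{x>0\}.
\]
Thus $\{V_1,[V_0,V_1]\}$ spans at every interior point, so $(P_t)$ is hypoelliptic with smooth transition densities, in particular strong Feller. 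Irreducibility I would deduce from the Stroock--Varadhan support theorem by an explicit control argument: strong forcing along $V_1$ moves $x$ essentially freely within $(0,\infty)$ while $x+y$ changes only slightly (its evolution $\tfrac{\rmd}{\rmd t}(x+y)=a-x$ is control-free), after which the uncontrolled flow readjusts $x+y$, showing the reachable set from any interior point to be the whole interior. Strong Feller together with irreducibility then yields, by the Doob--Khasminskii uniqueness criterion, that $(P_t)$ — and hence the RDS — admits at most one stationary distribution.

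\emph{Where the difficulty lies.} The genuine work comes from the degeneracy of the noise: a single Brownian motion drives the plane only along the direction $(-1,1)$, so both the strong-Feller regularity and the irreducibility feeding the Doob--Khasminskii argument must be produced by the H\"ormander and controllability analysis above rather than by ordinary ellipticity; and, in the first part, the subtle step is that the \emph{quadrant} $\mathbb{R}^2_+$ — not just the half-plane $\{x>0\}$ — is forward invariant. The rest is routine ODE and stochastic-flow theory.
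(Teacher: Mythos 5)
The paper's own ``proof'' of Theorem~\ref{THM:RDS_Brusselator} is a one-line citation to \cite{ArnoldBrus}, so there is no in-text argument to compare against; measured against what that reference actually does, your route is the standard (and essentially the same) one. Your computations check out: summing the two equations of \eqref{eq:ArnoldBruselator} does cancel the noise and gives $\rmd(x_t+y_t)=(a-x_t)\,\rmd t$; the conjugation $x_t=e^{-\sigma W_t}z_t$ does produce exactly the pathwise ODE you write (Stratonovich calculus obeys the chain rule, so the noise terms cancel identically); the forward a priori bounds ($\dot v\le a$, then the cubic term dominating) give non-explosion; and the bracket $[V_0,V_1]$ and the determinant $\det[V_1,[V_0,V_1]]=\sigma^2x^2$ are correct, so H\"ormander's condition holds on $\{x>0\}$ and strong Feller plus the controllability argument (free motion in $x$, with $x+y$ steered through $\tfrac{\rmd}{\rmd t}(x+y)=a-x$) legitimately feeds Doob--Khasminskii.

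The one genuine gap is the one you flag yourself and then outsource: forward invariance of the \emph{quadrant}, i.e.\ that $y_t$ stays positive. This is not a cosmetic omission --- it is precisely the content of the ``$\mathbb{R}^2_+$'' in the statement, and it is the only part of the theorem that does not follow from the soft machinery you deploy. Everything you actually prove lives on the half-plane $\{x>0\}$: the conjugated ODE controls $x$ and $x+y$, not $y$ alone, and no comparison or boundary-repulsion argument is available at $\{y=0\}$ because the noise does \emph{not} degenerate there --- on that boundary $\rmd y = x\,(b\,\rmd t+\sigma\circ\rmd W_t)$ has a nonvanishing martingale part, so the na\"ive ``the drift points inward'' reasoning that works for $\{x=0\}$ (where $\dot x=a>0$) fails for $\{y=0\}$. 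Whatever mechanism keeps $y$ positive (or whatever precise formulation of the state space the cited reference uses) is the real work behind the first assertion, and your proof does not supply it. A secondary, smaller point: if the state space really is $\mathbb{R}^2_+$, your controllability argument for irreducibility should be run inside the quadrant (reachable set within $\{x>0,\,y>0\}$), which again presupposes the unresolved invariance; as written it establishes irreducibility on $\{x>0\}$.
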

\begin{proof}
    See \cite{ArnoldBrus}.
\end{proof}
Theorem~\ref{THM:RDS_Brusselator} is a consequence of standard existence results for SDEs. In fact, it follows that solutions exist locally backwards in time, so that for each $(\omega,x,y)\in\Omega\times\mathbb{R}_+^2$ there is $\tau_{-}(\omega,x,y)<0$ for which the RDS can be extended up to the interval $(\tau_-(\omega,x,y),\infty)$. Nevertheless, we emphasize that the shift maps $\theta_t$ on the base noise space $\Omega$ are defined for all $t\in\mathbb{R}$, and it is sufficient for the cocycle map to be defined in $\mathbb{R}_+$.

\begin{remark}
\label{REMARK:conjecture}
    In \cite{ArnoldBrus} it is conjectured that in fact, for any $a,b,\sigma>0$ there is a unique stationary distribution $\rho$ with corresponding invariant measure $\mu$ for the RDS, whose \textit{sample measures} $\mu_{\omega}$ are given by delta distributions $\mu_\omega=\delta_{A(\omega)}$. In particular, the random equilibrium point $A(\omega)$ would be such that for all $t\geq0$
    \[
        \varphi^t_\omega(A(\omega))=A(\theta^t\omega), \quad \mathbb{P}-a.s.
    \]
    It has been further conjectured that the top Lyapunov exponent is negative, implying that this random equilibrium is exponentially attracting \cite{Newman2020}, and thus every pair of trajectories synchronizes exponentially fast as $t\rightarrow\infty$. 
    While it has been shown in \cite{ArnoldBrus} that there are initial conditions for which solutions do not exist for all negative times with positive probability, numerical explorations suggest that, in fact, the random equilibrium $A(\omega)$ constitutes the only trajectory whose dynamics can be extended for all $t<0$.
\end{remark}

In order to visualise the trajectories of the system we employ the Euler-Maruyama method \cite{KloedenPlaten92} which approximates the solution of a general It\^o SDE
\begin{equation}
\label{eq:GeneralIto}
	dX_t=f(X_t,t)+\sum_{i=1}^m{g_i(X_t,t)\; \rmd W_t^i},
\end{equation}
with vector fields $g_i=(g_{1i},g_{2i},\ldots,g_{ni})^\top$ and a fixed initial condition $X_0$ at $t=0$, via a recursive random map
\begin{equation}
\label{eq:Euler_Maruyama}	X_{n+1}=X_n+hf(X_n,t_n)+\sum_{i=1}^m g_i(X_n,t_n)\; \Delta W_n^i,
\end{equation}
where $\left(\Delta W_n^i\right)_{n=0}^{\infty}$ are independently and identically distributed normal random variables with mean $0$ and variance $h$ for step size $h > 0$ sufficiently small, and $t_n=nh$.

Hence, for numerical solutions we convert \eqref{eq:ArnoldBruselator} to its It\^o version using the \textit{Wong-Zakai corrections}. More precisely, for an It\^o SDE in $\mathbb{R}^n$ of the form \eqref{eq:GeneralIto}
where $f,g_i:\mathbb{R}^n\times \mathbb{R}\rightarrow \mathbb{R}^n$ for $i=1,2,\ldots,m$ are  smooth, let $g:\mathbb{R}^n\times\mathbb{R}\rightarrow \mathbb{R}^{n\times m}$ be the matrix whose columns consist of the vector fields $g_i$ for $i=1,\ldots,m$, and let $W_t:=(W_t^1,\ldots,W_t^m)^\top$. The It\^o and Stratonovich integrals are component-wise related for $l=1,2,\ldots,n$ by 
\begin{equation}
\label{eq:WongZakai}
	\left(\int_0^t{g(X_s,s)\circ dW_s}\right)_l=\left(\int_0^t{g(X_s,s) dW_s}\right)_l+\frac{1}{2}\int_0^t\sum_{k=1}^m\sum_{j=1}^n \frac{\partial g_{lk}}{\partial X_j}(X_t,t)\cdot g_{jk}(X_t,t)
\end{equation}

By using \eqref{eq:WongZakai} for \eqref{eq:ArnoldBruselator} we obtain the It\^o SDE
\begin{equation}
\label{eq: ArnoldBrusIto}
	\begin{array}{rcl}
		dx&=&[a-(1+b)x+x^2y+\frac{\sigma^2}{2}x]dt-\sigma x dW_t, \\
		dy&=& [bx-x^2y-\frac{\sigma^2}{2}x]dt+\sigma x dW_t,
\end{array}
\end{equation}
which we can use for numerical simulations of the induced random dynamical system. We start by exploring the \emph{two-point motion}, i.e.~comparing two trajectories $\varphi_{\omega}^t(x_0,y_0)$ and $\varphi_{\omega}^t(x_1,y_1)$ for a fixed noise path $\omega \in \Omega$. Figure~\ref{fig:BRUSSorbits} depicts such two trajectories for parameter values before and after the Hopf bifurcation.

\begin{figure}[ht]
     \centering
     \begin{subfigure}[b]{0.5\textwidth}
         \centering
             \begin{overpic}[width=\textwidth]{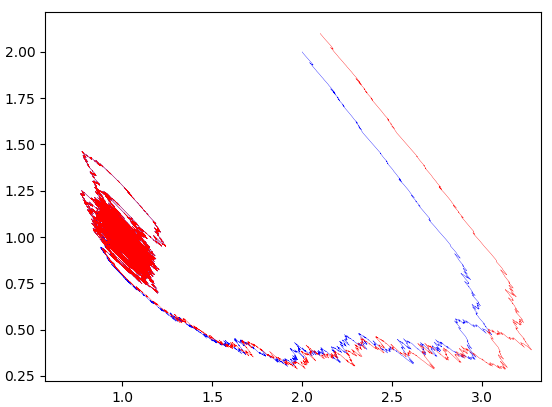}
        \put(52,-2){\scriptsize $x$}
        \put(-2,40){\scriptsize $y$}
        \end{overpic}
        \caption{}
         \label{subfig:equilibrium_stoch}
     \end{subfigure}
     \hfill
     \begin{subfigure}[b]{0.47\textwidth}
         \centering
         \begin{overpic}[width=\textwidth]{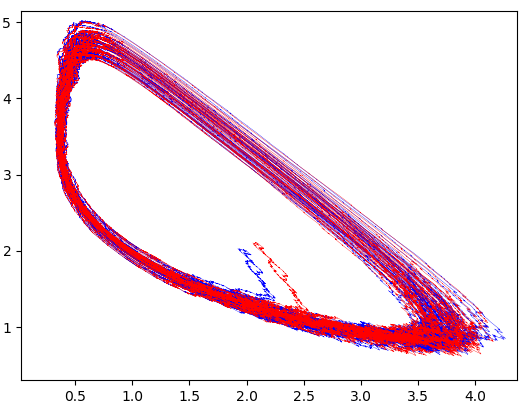}
        \put(52,-2){\scriptsize $x$}
        \put(-2,40){\scriptsize $y$}
        \end{overpic}
        \caption{}
         \label{subfig:oscillation_stoch}
     \end{subfigure}
     \hfill
     
       \caption[Two orbits for the stochastic Brusselator system]{
       Here we portray two different orbits of \eqref{eq: ArnoldBrusIto} with neighbouring initial conditions, which are subject to the same noise realization. We take a step size $h=10^{-3}$ and iterate the Euler-Maruyama scheme from $t_0=0$ to $T=150$. We take $\sigma=0.1$ and $a=1$. In (A), we consider the parameter $b=b_{crit}-1$ and the orbits tend to stay around the deterministic equilibrium point. In (B), we have $b=b_{crit}+1$, and the orbits exhibit fluctuations around the deterministic limit cycle. }
        \label{fig:BRUSSorbits}
\end{figure}

The qualitative picture in Figure~\ref{fig:BRUSSorbits} for $b>b_{crit}$ has some interesting features. On the one hand, given an initial condition $(x,y)\in\mathbb{R}_+^2$ the trajectory $\varphi^t_{\omega}(x,y)$ clearly resembles the triangle-like shape of the deterministic limit cycle, cf. Figure~\ref{fig:detBrusselator} -- starting from the bottom right, moves to the left in a slow time scale until it takes a fast transition towards a more vertical-like movement. Once the trajectory is closer to the vertical axis, it heads towards its uppermost part in an ultraslow fashion. Once on its apex, the system ``slides down'' almost in a straight diagonal line back to the bottom right in a fast time scale, where the trajectory repeats the process. However, due to the random perturbation of the system, the trajectory seems to choose different highest points, and times to reach them on each repetition. A similar behaviour for the Brusselator has been observed previously in \cite{gillespie1977exact} at the level of the Markov jump process. A more detailed description of the dynamics seen in Figure~\ref{fig:BRUSSorbits} in terms of a fast-slow system will be given in Section~\ref{SEC:slow-fas}, see also Figure~\ref{fig:Time_scales}.

In order to explore in more detail dynamical features in terms of noise-induced instabilities or synchronization, we calculate the distance between trajectories 
\[d_t\left( (x_0,y_0),(x_1,y_1)\right) = \|\varphi_{\omega}^t(x_0,y_0) -\varphi_{\omega}^t(x_1,y_1)\|,
\]
as shown in Figure~\ref{fig:BRUSSdistance}. We observe that, in both cases before and after the bifurcation, any two arbitrary orbits approach each other eventually as $t\rightarrow \infty$. 
However, the fashion in which this is done is qualitatively different. Indeed, when $b<b_{crit}$ the convergence between orbits is uniform, while in the case when $b>b_{crit}$ the orbits deviate repeatedly from each other, but they do so with less strength as time grows. In spite of that, the amplitude of their distance may become larger again at intermediate time windows.

\begin{figure}[ht]
     \centering
     \begin{subfigure}[b]{0.47\textwidth}
         \centering
         \begin{overpic}[width=\textwidth]{Dist_Eq.png}
        \put(52,-2){\scriptsize $t$}
        \put(-4,35){\scriptsize $d_t$}
        \end{overpic}
        \caption{}
         \label{subfig:distance eq}
     \end{subfigure}
     \hfill
     \begin{subfigure}[b]{0.45\textwidth}
         \centering
        \begin{overpic}[width=\textwidth]{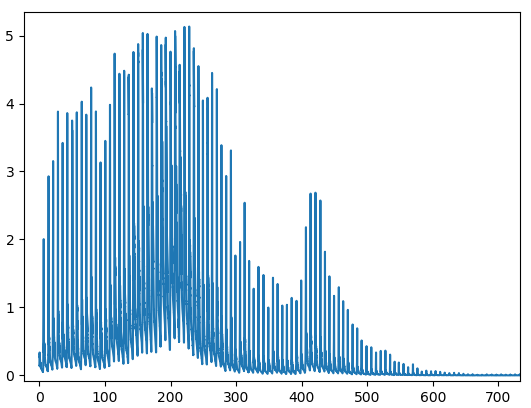}
        \put(52,-2){\scriptsize $t$}
        \put(-4,35){\scriptsize $d_t$}
        \end{overpic}
        \caption{}
         \label{subfig:distance cycle}
     \end{subfigure}
     \hfill
       \caption[Distance between orbits along time]{For the cases above, we depict here the distance between the two different orbits as a function of time. We consider $b=b_{crit}-1$ and $b=b_{crit}+1$ in (A) and (B), respectively. The rest of the parameters were chosen to be the same as in Figure \ref{fig:BRUSSorbits}. 
       Note the big difference between the scales in both axes.}
        \label{fig:BRUSSdistance}
\end{figure}

\section{Finite-time Lyapunov exponents}
\label{SEC:FTLE}
As mentioned in Remark~\ref{REMARK:conjecture} and sketched in Figure~\ref{fig:BRUSSdistance}, it was conjectured that any two trajectories of the system, under the influence of the same noise realization, synchronize exponentially fast for any parameter combination of $a, b$ and $\sigma$. However, Figure~\ref{fig:BRUSSdistance} still shows a clear qualitative difference between the case $b<b_{crit}$ and $b>b_{crit}$. Specifically, when $b>b_{crit}$ there seems to be a large time window where the two trajectories  deviate from each other recurrently before their distance finally starts approaching zero.

A useful quantity to study local finite-time stability in RDS are the finite-time Lyapunov exponents (FTLEs). For their computation, we consider the linearization along an orbit $\varphi^t_{\omega}(x,y)$ of (\ref{eq:ArnoldBruselator}). It is straightforward to see that $$v_t= \D \varphi^t_{\omega}(x,y)\cdot v_0, \quad v_0\in\mathbb{R}^2,$$  
satisfies the \textit{variational equation} for each $(x,y)\in \mathbb{R}_+^2$ and $\mathbb{P}$-a.e $\omega \in \Omega$ which, for system~\eqref{eq:ArnoldBruselator}, reads
\begin{equation}
\label{eq:variational}
	\rmd v=J\left( \varphi_{\omega}^t(x,y) \right)v \rmd t +\sigma \left[ \begin{array}{cc}
	-1 & 0\\
	1 & 0
\end{array}	 \right]v\circ \rmd W_t,
\end{equation}
where $J$ is the Jacobian matrix of the drift given by
\[
	J(x,y)=\left[ \begin{array}{cc}
		-(1+b)+2xy & x^2\\
		b-2xy & -x^2
\end{array}	 \right].
\]
We can also consider the matrix form of \eqref{eq:variational} as the SDE
\begin{equation}
    \label{eq: variational_matricial}
    \rmd\Phi = J\left( \varphi^t_\omega(x,y)\right)\Phi \rmd t + \sigma \left[ \begin{array}{cc}
	-1 & 0\\
	1 & 0
\end{array}	 \right]\Phi\circ \rmd W_t,
\end{equation}
where $\Phi\in\mathbb{R}^{2\times 2}$. 
\begin{definition}
    \label{DEF:FTLE}
        Let $\varphi^t_\omega(x,y)$ be a trajectory of \eqref{eq:ArnoldBruselator}, and let
        $\Phi_t$ be the solution of \eqref{eq: variational_matricial} at time $t$, with initial condition $\Phi_0=\id$. The (maximal) finite-time Lyapunov exponent (FTLE) of \eqref{eq:ArnoldBruselator} for $(\omega,x,y)$ at time $T>0$ is given by 
        \begin{equation}
            \label{eq:FTLE}
                \lambda^T(\omega,x,y):=\frac{1}{T}\ln\left\Vert \Phi_T\right\Vert,
        \end{equation}
    where $\Vert\cdot\Vert$ is the operator norm or, equivalently, its largest singular value.
\end{definition}

Note that the classical \textit{maximal Lyapunov exponent} is given by
\[
	\lambda(\omega,x,y)=\limsup_{T\rightarrow\infty} \lambda^T(\omega,x,y).
\]
Again, in order to solve numerically \eqref{eq: variational_matricial} above one must convert the system into its It\^o version using (\ref{eq:WongZakai}). We thus get the system
\[
	\begin{array}{rcl}
		\rmd x&=&[a-(1+b)x+x^2y+\frac{\sigma^2}{2}x]\rmd t-\sigma x \rmd W_t \\
		\rmd y&=& [bx-x^2y-\frac{\sigma^2}{2}x]\rmd t+\sigma x \rmd W_t.
	\\
	\rmd v&=&\left\lbrace J\left( \varphi_{\omega}^t(x_0,y_0)\right)+\frac{\sigma^2}{2}\left[  \begin{array}{cc}
		1 & 0\\
		-1 & 0
	\end{array} \right] \right\rbrace v\rmd t + +\sigma \left[ \begin{array}{cc}
	-1 & 0\\
	1 & 0
\end{array}	\right] v\cdot \rmd W_t.
\end{array}
\]
By fixing a noise realization, we compute numerically the FTLEs throughout the state space by implementing the Euler-Maruyama method, see \eqref{eq:Euler_Maruyama}. In Figure~\ref{fig:FTLElandscape}, the FTLE \eqref{eq:FTLE} is numerically calculated over a mesh of fine grid of initial conditions in the state space. We use the period of the deterministic limit cycle $\tau$ as a reference, so that $T\approx\tau/2$. For $ b >b_{crit}$ (Figure~\ref{fig:FTLElandscape}(A)), we observe an area of instability around the limit cycle within a certain region whose role is explored in more detail later, when exploring the multiscale dynamics. For $ b <b_{crit}$ (Figure~\ref{fig:FTLElandscape}(B)), no such instabilities can be found.
Note that the change of behaviour in terms of the FTLEs is really measured around the critical attracting object, i.e.~the equilibrium before the bifurcation and the limit cycle after the bifurcation. This is in contrast to where the change of stability is only measured around the equilibrium (see e.g.~\cite{Doanetal}). 

In Figure~\ref{fig:FTLESeries} we explore the variation of the FTLE with respect to $T$, by fixing both $\omega\in\Omega$ and $(x,y)\in\mathbb{R}^2_+$. Here, an interesting phenomenon is revealed: as the parameter $b$ increases, it seems to allow the FTLEs to attain positive values at much larger values of $T$. In other words, as $b$ grows the system exhibits finite-time instabilities in larger time windows. It remains an open question whether this can be achieved in the limit as $T\rightarrow\infty$ for an adequate rescaling of both $b$ and $\sigma$, showing that for some values of the parameters the classical Lyapunov exponent is positive. In the following, we explore the case in which $b\rightarrow\infty$.

\begin{figure}[ht]
     \centering
     \begin{subfigure}[b]{0.45\textwidth}
         \centering

          \begin{overpic}[width=\textwidth]{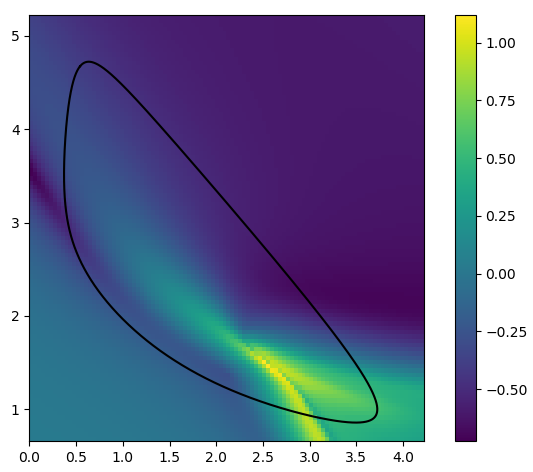}
        \put(50,-2){\scriptsize $x$}
        \put(-2,40){\scriptsize $y$}
        \end{overpic}
        \caption{}
         \label{subfig:FTLE_Thalf}
     \end{subfigure}
    \begin{subfigure}[b]{0.47\textwidth}
         \centering

          \begin{overpic}[width=\textwidth]{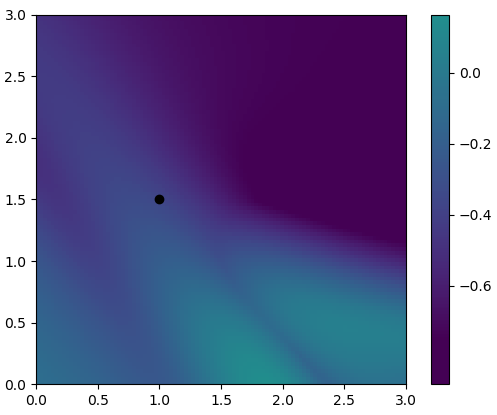}
        \put(50,-2){\scriptsize $x$}
        \put(-2,40){\scriptsize $y$}
        \end{overpic}
        \caption{}
         \label{subfig:FTLE_Eq}
     \end{subfigure}
 \caption[FTLE throughout the state space]{We depict a landscape of the maximal FTLE as given in Definition~\ref{DEF:FTLE}. Here we took $a=1$ and $\sigma=0.1$. The time window length considered was $T\approx \tau/2\equiv \frac{1}{2\Omega}$, where $\Omega$ is the dominant frequency obtained when applying the fast Fourier transform to a time series of the $x$ coordinate. We compute the FTLE throughout the state space in a neighbourhood of the deterministic invariant sets over a mesh of $100\times100$ points.  In (A), we took $b=b_{crit}+2$. For comparison, we portray in (B) the FTLE landscape for $b=b_{crit}-1$, for which the deterministic dynamics admits a globally attracting equilibrium.
}
        \label{fig:FTLElandscape}
\end{figure}

\begin{figure}[ht]
     \centering
     \begin{subfigure}[b]{0.46\textwidth}
         \centering
      \begin{overpic}[width=\textwidth]{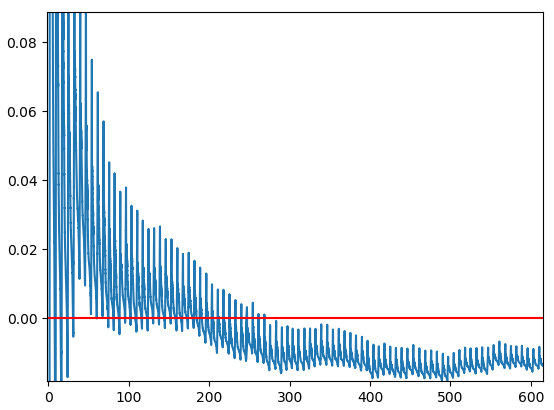}
        \put(50,-3){\scriptsize $T$}
        \put(-3,35){\scriptsize $\lambda^T$}
        \end{overpic}
        \caption{}
         \label{subfig:FTLE_timeseries1}
     \end{subfigure}
    \begin{subfigure}[b]{0.47\textwidth}
        \centering
         \begin{overpic}[width=\textwidth]{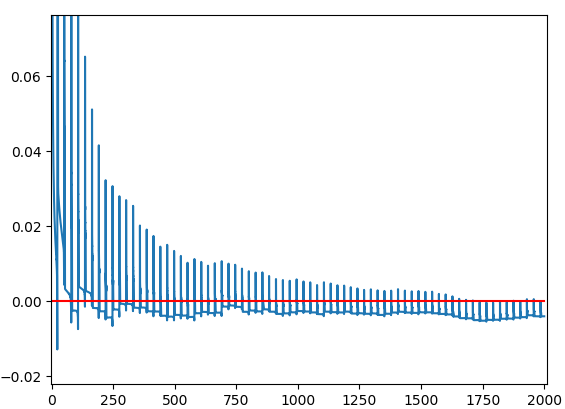}
        \put(50,-3){\scriptsize $T$}
        \put(-2,35){\scriptsize $\lambda^T$}
        \end{overpic}
        \caption{}
         \label{subfig:FTLE_timeseries2}
    \end{subfigure}

    \begin{subfigure}[b]{0.47\textwidth}
        \centering
         \begin{overpic}[width=\textwidth]{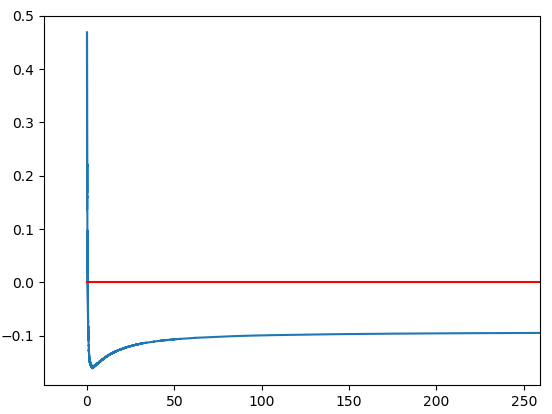}
        \put(50,-3){\scriptsize $T$}
        \put(-2,35){\scriptsize $\lambda^T$}
        \end{overpic}
        \caption{}
         \label{subfig:FTLE_timeseries3}
    \end{subfigure}
 \caption[FTLE for different values of $b$]{Here the FTLE is plotted as a function of $T$ for a fixed initial condition, considering $\sigma=0.1$, $a=1$, and a time step $h=10^{-3}$. In (A) we took $b=b_{crit}+1$, while in (B) we considered $b=b_{crit}+7$. The time windows in which the FTLE explores the positive real line is considerably larger in the latter case. As a comparison, in (C) we took $b=b_{crit}-1$, for which the system admits a globally attracting equilibrium.}
        \label{fig:FTLESeries}
\end{figure}

\section{A slow-fast structure for the Brusselator}
\label{SEC:slow-fas}
Consider again the deterministic system \eqref{eq:ArnoldBruselator}, with a change of coordinates (see also \cite{LiHouShen16}) given by
\begin{equation}
\label{eq:change_of_coordinates}
\left(\begin{array}{c}
    x \\
    y
\end{array}\right) \mapsto
\left(\begin{array}{c}
    y \\
    x+y
\end{array}\right)
\end{equation}
We thus obtain the SDE
\begin{equation}
\label{eq:SDE_change_of_variables}
    \begin{array}{rcl}
        \rmd x&=& \left[b(y-x)-x(y-x)^2\right]\rmd t+\sigma(y-x)\circ \rmd W_t,\\
        \rmd y&=&\left[a-(y-x)\right]\rmd t,
    \end{array}
\end{equation}
whose state space is now given by the set $\{(x,y) : x\geq0 \ y\geq x\}$. 
Based on the observations made in the previous section, we are interested in the case $b\gg b_{crit}$. 
Hence, we write $b=\frac{a}{\epsilon}$ with $\epsilon \ll 1$, so that system~\eqref{eq:SDE_change_of_variables} reads as
\begin{equation}
    \label{eq:slowDynamics}
    \begin{array}{rcl}
        \epsilon \rmd x&=&\left[a(y-x)-\epsilon x(y-x)^2\right] \rmd t+\epsilon \sigma(y-x)\circ \rmd W_t\\
        \rmd y&=& \left[a-(y-x)\right] \rmd t.
    \end{array}
\end{equation}
Observe that \eqref{eq:slowDynamics} has the structure of a singularly perturbed system in standard form, representing the dynamics on a slow time scale. By considering the fast time scale $\tau=t/\epsilon$, and recalling that $\Tilde{W}_\tau=\epsilon^{-1/2}W_{\epsilon \tau}$ is again a Wiener process for all $\epsilon$, we have the equivalent system (in the distributional sense) 
\begin{equation}
    \label{eq:fast dynamics}
    \begin{array}{rcl}
         \rmd x&=&\left[a(y-x)-\epsilon x(y-x)^2\right]\rmd\tau+\sqrt{\epsilon} \sigma(y-x)\circ \rmd\Tilde{W}_\tau\\
        \rmd y&=& \epsilon\left[a-(y-x)\right]\rmd\tau.
    \end{array}
\end{equation}

By taking $\epsilon=0$ in \eqref{eq:slowDynamics}, we obtain the \textit{reduced problem}
\begin{equation}
    \label{eq:reduced}
    \begin{array}{rcl}
     0&=&a(y-x),\\
     \rmd y &=& \left[a-(y-x)\right]\rmd t,
    \end{array}
\end{equation}
which is, in fact, a deterministic algebraic-differential equation. This yields the \emph{critical manifold}
\[
    \mathcal{S}_0=\{(x,y) \;\vert\; x=y\},
\]
see also Figure~\ref{fig:FastSLowFigure}. 

\subsection{Deterministic dynamics for $\epsilon\ll1$} 
As a first approximation, we sketch the dynamics in the deterministic system by considering $\sigma=0$ in \eqref{eq:slowDynamics}--\eqref{eq:fast dynamics}, that is we consider the deterministic slow-fast system in standard form
\begin{equation}
    \label{eq:slowfast_deterministic}
        \begin{array}{cc}
            \begin{array}{rcl}
            \epsilon\dot{x}&=&a(y-x)-\epsilon x(y-x)^2  \\
            \dot{y} &=& a-(y-x)
            \end{array} \quad &  
            \begin{array}{rcl}
            x' &=& a(y-x)-\epsilon x(y-x)^2\\
            y' &=& \epsilon[a-(y-x)],
             \end{array}
        \end{array}
\end{equation}
where $\dot{z}:=dz/dt$ and $z':=dz/d\tau$. In the next proposition we show that $\mathcal{S}_0$ is partially attracting.

\begin{prop}    \label{PROP:partial_hyperbolicity}
    The critical manifold $\mathcal{S}_0$ is normally hyperbolic. Its stable foliation has leaves
    \[
        W_s(z)=span\{(1,0)\},
    \]
    for each $z\in\mathcal{S}_0$. Furthermore, the dynamics of \eqref{eq:reduced} is given by
    \[
        x(t)=y(t)=x_0+at
    \]
\end{prop}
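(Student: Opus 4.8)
The plan is to verify directly the three assertions, which are largely computational once the slow-fast structure is set up. First I would address normal hyperbolicity of $\mathcal{S}_0 = \{x = y\}$. Writing the fast subsystem from the layer equation (the right-hand system in \eqref{eq:slowfast_deterministic} with $\epsilon = 0$), we have $x' = a(y - x)$ and $y' = 0$, so $y$ is a parameter and the fast flow is a linear contraction in $x$ toward $x = y$. The relevant transverse linearization is the derivative of the fast vector field $f(x,y) = a(y-x)$ with respect to the fast variable $x$, evaluated on $\mathcal{S}_0$: this is $\partial f / \partial x = -a < 0$. Since $a > 0$ by assumption, the single nontrivial eigenvalue is $-a$, which is bounded away from the imaginary axis uniformly along $\mathcal{S}_0$; hence $\mathcal{S}_0$ is normally hyperbolic (and attracting), as required by Fenichel theory.

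Next I would identify the stable foliation. The layer problem $x' = a(y-x)$, $y' = 0$ has, through each base point $z = (c,c) \in \mathcal{S}_0$, the stable fibre consisting of all initial conditions with the same value of $y$, i.e.\ the vertical line $\{(x, c) : x \ge c\}$; its tangent space is $\mathrm{span}\{(1,0)\}$, which is exactly the claimed $W_s(z)$. This is immediate from the fact that $y$ is constant under the fast flow and $x \to y$ exponentially. (One could phrase it as: the fast fibres are the level sets of the slow variable $y$, which here are horizontal lines, spanned by $(1,0)$.)

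Finally, for the reduced flow on $\mathcal{S}_0$: the reduced problem \eqref{eq:reduced} imposes $x = y$, and substituting into the slow equation $\dot y = a - (y - x)$ gives $\dot y = a - 0 = a$, hence $\dot x = \dot y = a$. Integrating with initial datum $x_0 = y_0$ yields $x(t) = y(t) = x_0 + at$, the stated expression. (Strictly, one obtains $\dot y = a$ from differentiating the constraint $x = y$ along with the slow equation; it is worth noting the constraint $x = y \ge 0$ is preserved and the solution exists for all $t \ge 0$ in the admissible region, consistent with the state space $\{x \ge 0,\ y \ge x\}$ restricted to its boundary $\mathcal{S}_0$.)

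I do not anticipate a genuine obstacle here: every step is a short explicit computation, and normal hyperbolicity reduces to checking the sign of a single scalar eigenvalue $-a$. The only mild subtlety is a notational/interpretational one — making sure the transverse eigenvalue is read off from the fast direction (differentiating $a(y-x)$ in $x$, not in $y$) and that the foliation is described in the original $(x,y)$ coordinates of \eqref{eq:SDE_change_of_variables} rather than the pre-transformation Brusselator coordinates. I would also briefly remark that $\mathcal{S}_0$ being \emph{attracting} (eigenvalue negative) is what makes it relevant for the slow-fast mechanism discussed subsequently, even though the statement only claims normal hyperbolicity.
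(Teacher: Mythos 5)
Your proof is correct and follows essentially the same route as the paper: both arguments read off the single nontrivial transverse eigenvalue $-a<0$ of the layer vector field $(a(y-x),0)$ with eigenvector $(1,0)$, and both obtain the reduced flow by substituting $x=y$ into $\dot y = a-(y-x)$ to get $\dot y = a$. (Only a cosmetic slip: the fibre $\{y=c\}$ you describe is a horizontal, not vertical, line, and within the state space it is $\{(x,c):0\le x\le c\}$; this does not affect the tangent space $\mathrm{span}\{(1,0)\}$.)
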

\begin{proof}
    Let $h(x,y)=(a(y-x),0)$. Its Jacobian matrix is calculated as
    \[
    Dh(x,y)=\left[ \begin{array}{cc}
        -a & a \\
         0 & 0
    \end{array} \right],
    \]
    for which the nontrivial eigenvalue is simply $\lambda=-a<0$, with eigenvector $(1,0)$. By considering $x=y$, then $\rmd y/\rmd t=a$ and the result follows.
\end{proof}

Fenichel's theorem (see for instance \cite[Theorem 2.4.2]{Kuehn15}) implies that for any compact submanifold $\hat{\mathcal{S}}_0$ of $\mathcal{S}_0$ there is $\epsilon_0$ sufficiently small such that for each $\epsilon\in(0,\epsilon_0)$ there is a normally hyperbolic locally invariant manifold $\hat{\mathcal{S}}_\epsilon$ which is an $\mathcal{O}(\epsilon)$-perturbation of $\mathcal{S}_0$. Since $\mathcal{S}_0$ is normally hyperbolic, we can take $\hat{\mathcal{S}}_0$ arbitrarily long. Yet, the flow makes a turn and traverses the nullcline $\dot{x}=0$, namely
\[
\mathcal{N}=\left\{(x,y) : y=x+\frac{a}{\epsilon x}\right\}.
\]
Since the nullcline has $\mathcal{S}_0$ as its asymptote as $x\rightarrow\infty$, whenever $\epsilon\ll 1$ the point in which the flow traverses the nullcline is still very close to $\mathcal{S}_0$, but its $x$-value is large. Hence, the dominating term for $x'$ is the cubic term $-\epsilon x(y-x)^2$ (see \eqref{eq:slowfast_deterministic}), and the flow heads to the left  until it reaches $\mathcal{N}$ again. After this, the flow follows $\mathcal{N}$ closely with a slow time scale, up to a turning point where it takes a fast transition back to a neighborhood of $\mathcal{S}_0$, and the cycle is reapeated. 

It is worth noticing that due to Proposition~\ref{PROP:partial_hyperbolicity}, no loss of hyperbolicity anticipates the turning from $\mathcal{S}_0$. 
The fact that the nullcline $\mathcal{N}$ approaches $\mathcal{S}_0$ as $x\rightarrow\infty$ suggests that the analysis should be performed near ``infinity'', by compactifying the state space. To the best of our knowledge, no rigorous singular perturbation analysis of the Brusselator has been done before, for which we refer to \cite{EngelOlicon23}, where by compactifying the state space, the ``line at infinity'' becomes a nonhyperbolic invariant manifold, which can be desingularized using an appropriate rescaling of the time variables. Similar arguments have been implemented in \cite{Kuehn14, Szmolyan09}.

\subsection{Stochastic Brusselator for $\epsilon\ll 1$.}
Let us sketch the stochastic dynamics of \eqref{eq:slowDynamics}--\eqref{eq:fast dynamics} for $\sigma>0$. First, notice that the noisy perturbation in \eqref{eq:slowDynamics} vanishes precisely on $\mathcal{S}_0$, which is an attracting normally hyperbolic invariant manifold for the deterministic system. It is intuitively clear that the trajectories of \eqref{eq:slowDynamics} starting close to $\mathcal{S}_0$, will remain so for some long time, whenever both $\epsilon$ and $\sigma$ are sufficiently small. Similarly to the deterministic case, our numerical explorations suggest that if a trajectory which is following closely the critical manifold $\mathcal{S}_0$ traverses the nullcline $\mathcal{N}$, it enters the fast regime and moves practically horizontally to the left until reaching $\mathcal{N}$ again. The process  then fluctuates around $\mathcal{N}$ until being sufficiently close to $\mathcal{S}_0$ again, see Figure~\ref{fig:FastSLowFigure}. 

Observe that this description matches completely with the one given in Section~\ref{SEC:brusselatorIntro} for Figure~\ref{fig:BRUSSorbits} by reverting the change of coordinates \eqref{eq:change_of_coordinates}. A matching between the corresponding time scales in systems \eqref{eq:ArnoldBruselator} and \eqref{eq:slowDynamics} is portrayed in Figure~\ref{fig:FastSLowFigure}.
\begin{figure}[ht]
     \centering
     \begin{subfigure}[b]{0.43\textwidth}
         \centering

        \begin{overpic}[width=\textwidth]{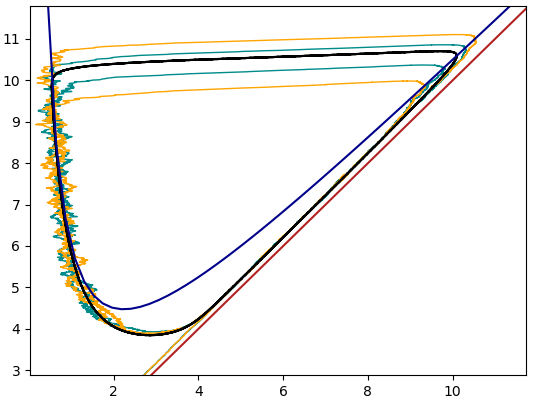}
        \put(50,-3){\scriptsize $x$}
        \put(-3,35){\scriptsize $y$}
        \end{overpic}
        \caption{}
         \label{subfig:Orbits_SlowFast} 
     \end{subfigure}    
    \begin{subfigure}[b]{0.45\textwidth}
        \centering
        \begin{overpic}[width=\textwidth]{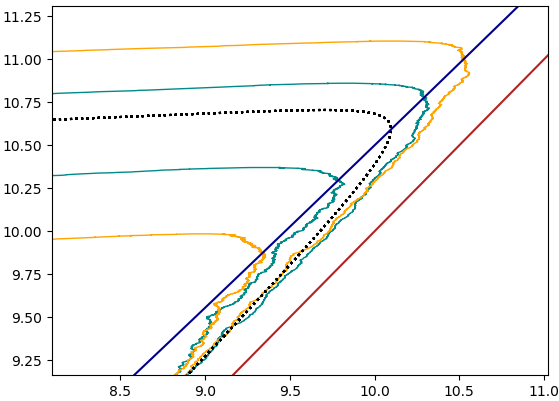}
        \put(50,-3){\scriptsize $x$}
        \put(-2,35){\scriptsize $y$}
        \end{overpic}
        \caption{}
         \label{subfig:Orbits_SlowFast_zoom} 
    \end{subfigure}
 \caption[Orbits in new coordinate system]{Two different orbits of system \eqref{eq:slowDynamics} with initial conditions $(2,2)$ and $(2.1,2.1)$ are obtained numerically and presented in (A) in blue and yellow, respectively. The critical manifold $\mathcal{S}_0$ is portrayed in red, the vertical nullcline in blue, and the deterministic limit cycle in black. A closer look at the \textit{turning points} is shown in (B). The parameters taken here are $a=1$, $b=b_{crit}+1$, and $\sigma=0.2$}
        \label{fig:FastSLowFigure}
\end{figure}

\subsubsection*{A mechanism for noise-induced (finite-time) instabilities} At last, let us sketch the mechanism by which the Brusselator exhibits finite-time instabilities. We refer the reader to Figure~\ref{fig:FastSLowFigure} for the time scale regimes of the state space.

Let us consider $(x_0,y_0)$ and $(x_1,y_1)$ two initial conditions for the system \eqref{eq:slowDynamics}, which are close to each other and also sufficiently close to $\mathcal{S}_0$. Let $\tau_{\mathcal{N}}(\omega,x_i,y_i)$ be the first hitting time of the trajectory $\varphi_{\omega}^t(x_i,y_i)$ ($i=0,1$) to the nullcline $\mathcal{N}$, so that for $t<\min\{\tau_{\mathcal{N}}(\omega,x_0,y_0), \tau_{\mathcal{N}}(\omega,x_1,y_1)\}$, it yields
\[d_t((x_0,y_0),(x_1,y_1))\approx  \| (x_0,y_0)-(x_1,y_1) \|^2. \]
In other words, the ultraslow regime (I) close to $\mathcal{S}_0$ is of a neutral nature, and no finite-time instabilities are observed in such regime. 

It is worth noticing that for $\epsilon$ sufficiently small, in Figure~\ref{fig:FastSLowFigure} the transitions from the ultraslow regime (I) to the subsequent fast regime (II) may occur at different altitudes and at different times, generating an almost instantaneous deviation between the trajectories if $\epsilon\ll 1$. Indeed, the trajectories $\varphi_\omega^t(x_i,y_i)$ ($i=0,1$) may traverse the deterministic nullcline $\mathcal{N}$ in points which are sufficiently away from each other, and therefore at different times. When one of them, say $\varphi^t_{\omega}(x_0,y_0)$, enters the fast regime first, it moves quickly towards $\mathcal{N}$ while $\varphi^t_{\omega}(x_1,y_1)$ may remain still close to $\mathcal{S}_0$. 

The slow regime (III) and the fast regime (IV) are the only parts of the state space where the system might present noise-induced stabilization. Not only a rigorous analysis of the FTLEs on the regimes (I)-(IV) would give an indication of the finite-time instabilities that the system exhibits, but it may be of use in order to prove or disprove the conjecture that the classical Lyapunov exponent is negative.

\begin{figure}[ht]
     \centering
     \begin{subfigure}[b]{0.45\textwidth}
         \centering

        \begin{overpic}[width=\textwidth]{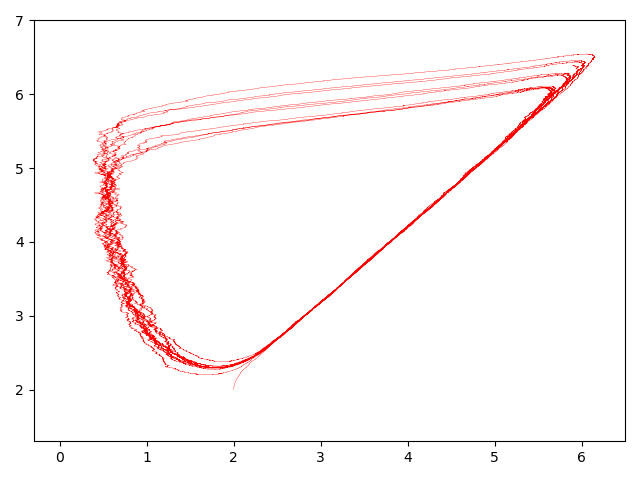}
        \put(50,-3){\scriptsize $x$}
        \put(-3,35){\scriptsize $y$}
        \put(35,65){\tiny (II) fast}
        \put(9,24){\tiny \begin{turn}{90} (III) slow \end{turn}}
        \put(20,11){\tiny (IV) fast}
        \put(50,21){\tiny \begin{turn}{45} (I) ultra slow \end{turn}}
        
        \end{overpic}
        \caption{}
         \label{subfig:timescales} 
     \end{subfigure}    
    \begin{subfigure}[b]{0.45\textwidth}
        \centering
        \begin{overpic}[width=\textwidth]{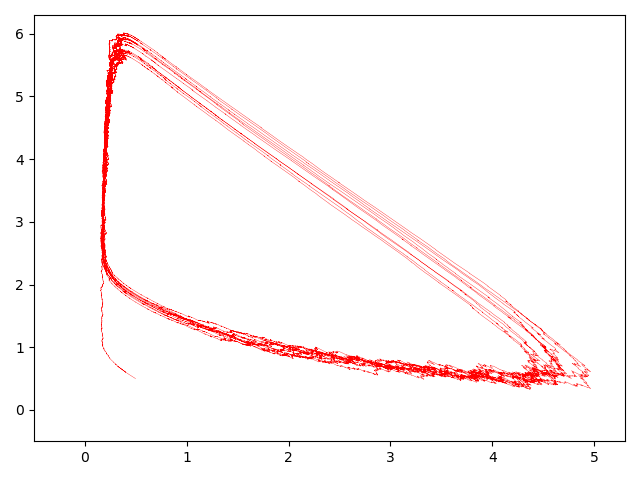}
        \put(50,-3){\scriptsize $x$}
        \put(-2,35){\scriptsize $y$}
        \put(9,34){\tiny \begin{turn}{90} (I) ultra slow\end{turn}}
        \put(46,56){\tiny \begin{turn}{-41}(II) fast\end{turn}}
        \put(52,10){\tiny (III) slow}
        \put(14,22){\tiny \begin{turn}{-18}(IV) fast\end{turn}}
        \end{overpic}
        \caption{}
         \label{subfig:timescales_original} 
    \end{subfigure}
 \caption[Orbits in new coordinate system]{The time scale separation in the stochastic Brusselator. In (A), an example of an orbit of the system \eqref{eq:slowDynamics} is portrayed, following the regimes (I), (II), (III), and (IV) in that order (counterclockwise). In (B), we present a prototypical trajectory in the original coordinates (clockwise), see \eqref{eq:ArnoldBruselator}. Each regime (I)-(IV) is identified via the change of coordinates \eqref{eq:change_of_coordinates}}
        \label{fig:Time_scales}
\end{figure}

\section{Concluding remarks and outlook}
\label{SEC:conclusion}
In this work we have studied finite-time instabilities for a stochastic Brusselator via FTLEs. In \cite{ArnoldBrus} it was conjectured that \textit{parametric noise destroys the Hopf bifurcation}, in the sense that for any $\sigma>0$ the system \eqref{eq:ArnoldBruselator} admits a unique random equilibrium which is exponentially attracting. While this conjecture remains open, our numerical explorations show that finite-time instabilities occur in arbitrarily large time intervals as the parameter $b$ grows. 

In order to understand better the role of large $b$ already for the deterministic system, we have sketched a suitable singular perturbation analysis in Section~\ref{SEC:slow-fas}, introducing the parameter of time scale separation $\varepsilon \approx 1/b$. In \cite{EngelOlicon23}, under suitable rescalings of $x,y$ and $t$, we study the existence of a \textit{critical cycle} for the deterministic system from which the attracting periodic orbits are $\mathcal{O}(\epsilon)$-perturbations.

We consider it possible that an appropriate scaling of both the parameters $b$ and $\sigma$ may lead to \textit{noise-induced chaos} (cf.~\cite{ChemnitzEngel2022}), giving a negative answer to the conjecture posed in \cite{ArnoldBrus}. Along these lines, there exist extensions of singular perturbation analysis in the stochastic setting, see for instance \cite[Chapter 15]{Kuehn15}. This analysis for the stochastic Brusselator is left for future work.

While we have focused on the stochastic Brusselator \eqref{eq:ArnoldBruselator} defining a global RDS, it is still possible to consider the chemical Langevin equation \eqref{eq:BRUSSLangevin} and perform a similar exploration, despite solutions not being defined for all $t>0$. A suitable treatment of this situation may involve the notion of an \textit{absorbed Markov process} so that the process stops when one of the reactants becomes zero. In such a context, stationary distributions cannot exist and have to be replaced by the concept of \textit{quasi-stationary distributions}, for an introduction to the topic see \cite{Collet13}. On this basis, one can study changes of stability via \textit{conditioned Lyapunov exponents} \cite{EngelLambRasmussen2}.

At the level of the Markov jump process \eqref{eq:tcr}, the trajectories portrayed in \cite{gillespie1977exact} resemble those obtained in Figure~\ref{fig:BRUSSorbits}. As mentioned in \cite{gillespie1977exact}, these noisy oscillations have ``...the apparent inability of the system to consistently retrace its previous path on the diagonal leg of the limit cycle.'' In our setting, this property emerges as a consequence of the transition from a slow regime into a fast regime of the system, as exhibited in Figure~\ref{fig:FastSLowFigure}. To the best of our knowledge, this point of view has not been studied for the Markov jump process of the Brusselator reaction network, yielding another intriguing direction for future work.

\subsection*{Acknowledgements} We acknowledge the support of Deutsche Forschungsgemeinschaft (DFG) through CRC 1114 and under Germany's Excellence Strategy -- The Berlin Mathematics Research Center MATH+ (EXC-2046/1, project 390685689, in particular subproject AA1-8).  M.~E. additionally thanks the DFG-funded SPP 2298 for supporting his research. G.~O.-M. also thanks FU Berlin for a 3-month Forschungsstipendium. The authors gratefully acknowledge Peter Szmolyan and Bernd Krauskopf, for insightful discussions.

\bibliography{bibliography}
\bibliographystyle{amsplain}

\end{document}